\newtheorem{theorem}{Theorem}[section]
\newtheorem{cor}[theorem]{Corollary}
\newtheorem{lem}{Lemma}
\newtheorem{claim}[theorem]{Claim}
\newtheorem{definition}[theorem]{Definition}
\newtheorem{obs}[theorem]{Observation}
\newtheorem{question}[theorem]{Question}
\def\qed{\hfill \ifhmode\unskip\nobreak\fi\quad\ifmmode\Box\else$\Box$\fi\\ }
\author{Xuding Zhu\thanks{Department of Mathematics, Zhejiang Normal University,  China.  E-mail: xudingzhu@gmail.com. Grant Number: CNSF11571319} }
\title{\large \bf The Z-cubes:  a hypercube variant with small diameter}
\begin{document}
\maketitle

\begin{abstract}
This paper introduces a new variant of hypercubes, which we call Z-cubes.
The $n$-dimensional Z-cube $H_n$ is obtained from two copies of the  $(n-1)$-dimensional Z-cube $H_{n-1}$
by adding a special perfect matching between the vertices of these two copies of $H_{n-1}$.
We prove that the  $n$-dimensional Z-cubes
$H_n$ has diameter $(1+o(1))n/\log_2 n$. This greatly improves on the previous known variants of
 hypercube of dimension $n$, whose diameters are all larger than   $n/3$. Moreover,   any hypercube variant
 of dimension $n$ is an
 $n$-regular graph  on $2^n$ vertices, and hence   has diameter greater than $n/\log_2 n$.
 So the Z-cubes are optimal with respect to diameters, up to an error of order $o(n/\log_2n)$.
 Another type of Z-cubes $Z_{n,k}$ which have similar structure and properties as $H_n$
 are also discussed in the last section.
\end{abstract}

 \section {Introduction}

Multiprocessor interconnection networks can be represented by graphs, where vertices represent processors and edges represent
links between processors. For the processors to communicate efficiently,
it is desired that the networks   have   small  communication delay, i.e., the graphs have small diameter.

The hypercube network is one of the most popular interconnection networks and has been used
in both the Intel iPSC and the NCUBE/10 systems.
The $n$-dimensional hypercube $Q_n$ is a graph whose vertices are binary strings of length
$n$, with $x, y$ adjacent if and only
if $x$ and $y$ differ in exactly one bit. Alternately, we have  $Q_1=K_2$ with vertices $0$ and $1$,
and for $n \ge 2$, $Q_n$  is obtained from two copies of $Q_{n-1}$, $0Q_{n-1}$ and $1Q_{n-1}$, by
adding an edge connecting $0x$  and $1x$   for every $x \in Q_{n-1}$.
The popularity of hypercubes is due to its  simple structure,
relatively small diameter and small  vertex degree.

However, hypercubes do not have the smallest diameter for its resource.
Many variants of hypercubes
have been introduced and studied in the literature, including
 twisted cubes \cite{ap1991},  Mobius cubes \cite{CL1995}, cross cubes \cite{e1991}, etc.
 The various $n$-dimensional cubes  have
    binary strings of length $n$ as vertices, they are $n$-regular,
 and have hierarchical structure, i.e., constructed from
  lower dimensional   cubes by adding appropriate edges \cite{DE1990}.
A main feature of   the above mentioned  hypercube variants is that they have smaller diameter:
These $n$-dimensional  cubes all have diameter about half of that of the $n$-dimensional hypercube.

A natural question is whether there are $n$-dimensional cubes with smaller diameter.

In \cite{ZFJZ}, a new hypercube variant, the spined cube, was introduced. It was proved in \cite{ZFJZ} that
the $n$-dimensional spined
cube has diameter about $n/3$. Up to now, this is the only hypercube variant
which has diameter smaller than $n/2$.
Is it possible to reduce further the diameter?

In this paper, by generalizing the construction in \cite{ZFJZ},
we introduce a new variant of hypercubes $H_n$, which we call the Z-cubes.
In \cite{FH2003}, a graph $G$ is called an $n$-dimensional
 {\em bijective connection graph} (abbreviated as BC graph)   if
 $V(G)$ can be partitioned into $V_0 \cup V_1$,
 each $G[V_i]$ is an $(n-1)$-dimensional BC graph, and the edges between $V_0$ and $V_1$ is a
 perfect matching of $G$ (with $G=K_2$ when $n=1$). Many variants of hypercubes (including hypercubes itself) are special families of BC graphs.
 Our Z-cubes is also a special family of BC graphs.
For $n=1$, $H_1=K_2$. For $n \ge 2$,   $H_n$ is obtained from the disjoint union of
two copies of $H_{n-1}$ by adding a special perfect matching between vertices of these two copies of
$H_{n-1}$.
 We shall prove that
$H_n$ has diameter $ (1 + o(1)) \frac{n}{\log_2n}$.

As any $n$-dimensional hypercube variant is $n$-regular and has $2^n$ vertices,
easy counting shows that they have diameter  larger than $ n/\log_2 n$.
So the  $n$-dimensional Z-cube is  near-optimal in the sense of diameter.

This paper is organized as follows: In Section 2, we present the construction of the Z-cube $H_n$
and prove that for $n \ge 3$, $H_n$ is Hamiltonian connected. In Section 3, we prove an upper bound for the
diameter of $H_n$. In Section 4, we raise some open questions, and also present another kind of Z-cubes, $Z_{n,k}$. For each fixed integer $k$,
$Z_{n,k}$ is a family of Z-cubes, whose diameter is bounded from above by $n/(k+1) + 2^k$. By taking $k= \lceil \log_2 n - 2 \log_2 \log_2 n\rceil$,
the resulting Z-cube $Z_{n,k}$ has diameter $(1+o(1))n/\log_2 n$. These Z-cubes have simpler structure than $H_n$.
The disadvantage is that to achieve the bound $(1+o(1))n/\log_2n$, for different $n$,
we need to start with different lower dimensional cubes.


 \section{The Z-cube $H_n$}

 Denote by $Z_2^n$ the set of binary strings of length $n$.
 For $x,y \in Z_2^n$, $x \oplus y$ denotes the sum of $x$ and $y$ in the group $Z_2^n$,
 i.e., $(x \oplus y)_i = x_i + y_i \pmod{2}$ (for $x \in Z_2^n$, denote by $x_i$ is the $i$th bit of $x$).

 If $x$ is a binary string of length $n_1$ and $y$ is a binary string of length $n_2$, then
 $xy$ is the {\em concatenation} of $x$ and $y$, which is a binary string of length $n_1+n_2$.
 If $Z$ is a set of binary strings, then let $xZ = \{xy: y \in Z\}$.

 For $x  \in Z_2^n$, and for $1 \le i < j \le n$, denote by $x[i,j]$ the binary string $x_ix_{i+1}\ldots x_j$.

Let $\kappa$ be the integer function defined as follows:
$$\kappa(n)= \begin{cases}
0, & \mbox{\rm if} \ $n=1$, \cr
\max\{1,  \lceil \log_2 n - 2 \log_2 \log_2 n \rceil\}, & \mbox{\rm otherwise.} \cr
\end{cases}
$$

We define a permutation $\phi$ of binary strings as follows:
\begin{definition}
Assume $x \in Z_2^n$.
Then $\phi(x) \in Z_2^n$ is the binary string such that
  \begin{eqnarray*}
 \phi(x)[1,\kappa(n)] &=& x[1,\kappa(n)] \oplus x[n-\kappa(n)+1,n],\\
 \phi(x)[\kappa(n)+1,n] &=& x[\kappa(n)+1,n].
 \end{eqnarray*}
 \end{definition}

Note that the restriction of $\phi$ to $Z_2^n$ is indeed a permutation of $Z_2^n$, with $\phi^2(x)=x$.

Now we are ready to define Z-cube $H_n$.
\begin{definition}
If $n=1$ then $H_n=K_2$, with vertices $0$ and $1$.
Assume $n \ge 2$. Then $H_n$ is obtained from two copies of $H_{n-1}$,   $0H_{n-1}$ and $1H_{n-1}$,  by
 adding edges connecting  $0x $ and $1\phi(x)$ for all $x \in H_{n-1}$.
 \end{definition}

The vertex set of $H_n$  is $Z_2^{n}$. For convenience, we  use $H_n$ to denote its vertex set as well.
Thus for $\theta \in \{0,1\}$, $\theta H_n= \theta Z_2^n=\{\theta x: x \in Z_2^n\} \subseteq Z_2^{n+1}$.

The following observation follows easily from the definition.

\begin{obs}
For $1 \le q < n$, for $a \in Z_2^q$, the subset
$aZ_2^{n-q}$ induces a copy of $H_{n-q}$.
\end{obs}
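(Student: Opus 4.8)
The plan is to argue by induction on $q$, with the recursive definition of $H_n$ doing all the work. The one elementary fact I would isolate first is that for every $m\ge 2$ and every $\theta\in\{0,1\}$, the subgraph of $H_m$ induced by $\theta Z_2^{m-1}$ is precisely the copy $\theta H_{m-1}$ of $H_{m-1}$, with the bijection $z\mapsto\theta z$ serving as an isomorphism $H_{m-1}\to H_m[\theta Z_2^{m-1}]$. This is immediate from the construction: every edge of $H_m$ either lies inside $0H_{m-1}$, or lies inside $1H_{m-1}$, or joins $0x$ to $1\phi(x)$ for some $x$; in the last case one endpoint is in $0Z_2^{m-1}$ and the other in $1Z_2^{m-1}$, so no such edge is induced on a single copy. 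Hence $H_m[\theta Z_2^{m-1}]$ carries exactly the edges of $\theta H_{m-1}$.

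For the base case $q=1$ the statement is exactly this elementary fact. For the inductive step, take $a\in Z_2^q$ with $2\le q<n$ and write $a=\theta b$ with $\theta\in\{0,1\}$ and $b\in Z_2^{q-1}$. By the elementary fact, $\psi\colon z\mapsto\theta z$ is an isomorphism of $H_{n-1}$ onto $H_n[\theta Z_2^{n-1}]$. Since $aZ_2^{n-q}=\theta(bZ_2^{n-q})\subseteq\theta Z_2^{n-1}$, and an isomorphism restricts to an isomorphism between corresponding induced subgraphs, $H_n[aZ_2^{n-q}]$ is isomorphic to $H_{n-1}\bigl[bZ_2^{(n-1)-(q-1)}\bigr]$. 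Because $1\le q-1<n-1$, the induction hypothesis applied inside $H_{n-1}$ (which has dimension $n-1$) shows that the latter is a copy of $H_{(n-1)-(q-1)}=H_{n-q}$, which completes the induction.

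I do not expect any real obstacle here: the only point that needs attention is the verification underlying the elementary fact, namely that the perfect matching added in forming $H_m$ never has both ends in the same copy of $H_{m-1}$. This is forced by the form $0x\leftrightarrow 1\phi(x)$ of the matching edges, and it is exactly what guarantees that the induced subgraphs are honest copies of lower-dimensional Z-cubes rather than copies carrying spurious extra edges. An equivalent way to package the argument, if one prefers to avoid induction on $q$, is to iterate the elementary fact $n-q$ times, peeling off one prefix bit at a time.
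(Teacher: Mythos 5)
Your proof is correct, and it is exactly the intended argument: the paper offers no proof beyond the remark that the observation ``follows easily from the definition,'' and your induction on $q$, peeling off one prefix bit at a time via the isomorphism $z\mapsto\theta z$ onto $\theta H_{n-1}$, is the natural way to make that precise. The one point worth isolating --- that the matching edges $0x\sim 1\phi(x)$ never have both ends in the same copy, so the induced subgraph carries no spurious edges --- is correctly identified and handled.
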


A walk $W$ in $H_n$ is viewed as
a sequence of vertices of $H_n$. If $W=(v_1,v_2, \ldots, v_m)$, then for $a \in Z_2^t$, $aW=(av_1, av_2, \ldots, av_m)$
is a walk in $H_{n+t}$ (here $av_j$ is the concatenation of two binary strings). For two walks $W_1, W_2$ in $H_n$,
if the last vertex of $W_1$ is adjacent to the first vertex of $W_2$, then the concatenation of $W_1$ and $W_2$, denoted by
$W_1 \cup W_2$ is also a walk in $H_n$.

It follows from the definition that $H_1=K_2, H_2=C_4$ and $H_3$ is as depicted in Figure 1 below.

\begin{figure}[ht]
  \begin{center}
    \includegraphics[scale=0.45]{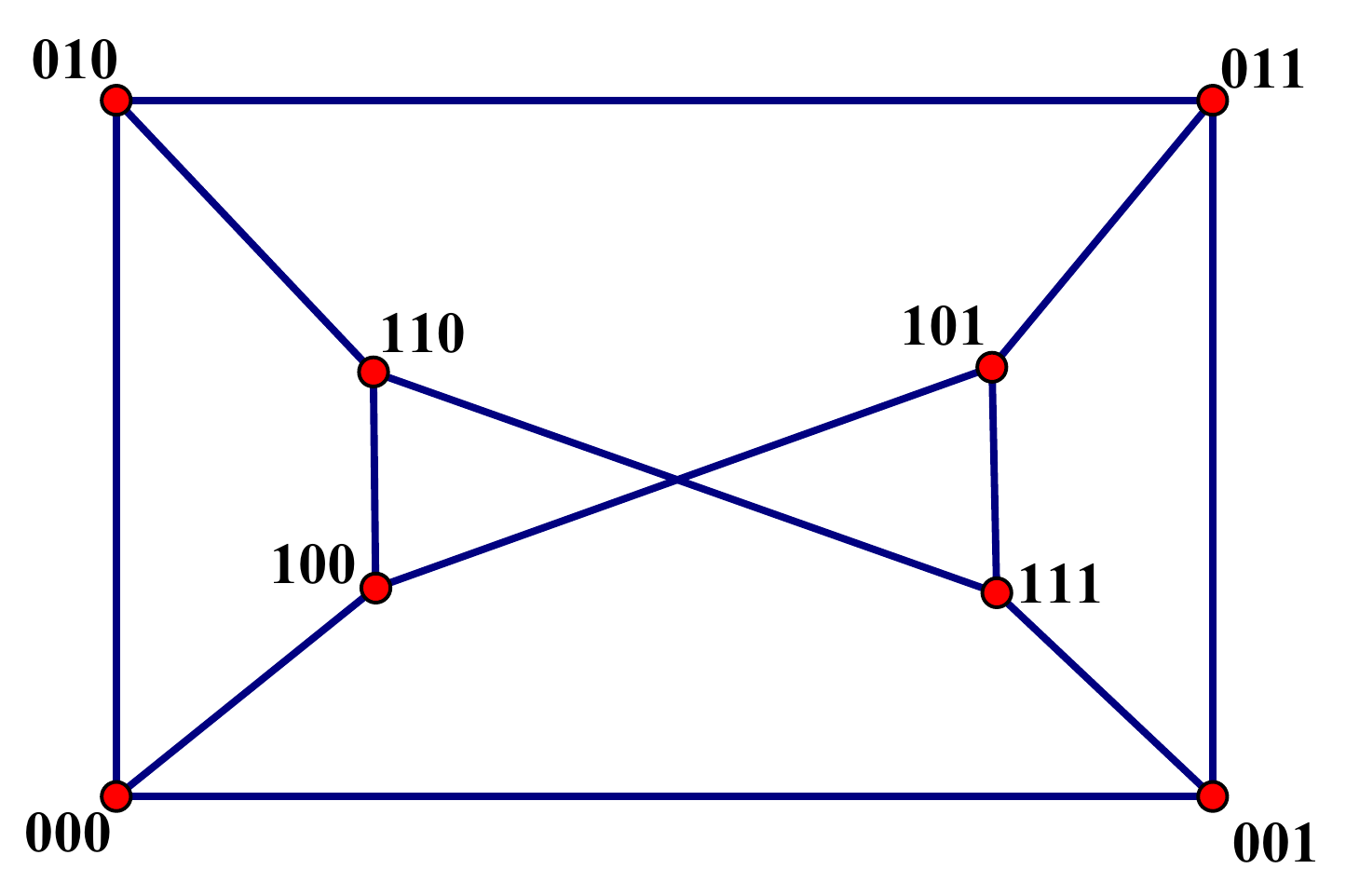}
\caption{The graph $H_3$.}
 \label{fig1}
   \end{center}
\end{figure}

A graph is called {\em Hamiltonian connected} if any two vertices are connected by a Hamiltonian path.

\begin{lem}
\label{hal}
For $n \ge 3$, the Z-cube $H_n$ is Hamiltonian connected.
\end{lem}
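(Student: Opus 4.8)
The plan is to prove this by induction on $n$, with base case $n = 3$. For $n = 3$, $H_3$ is the explicit graph on $8$ vertices shown in Figure~\ref{fig1}, and I would verify Hamiltonian connectedness by direct inspection: for each pair of vertices one exhibits a Hamiltonian path joining them, the number of cases being small (and further reducible using the symmetries of $H_3$). It is worth noting that the induction cannot be anchored earlier, since $H_2 = C_4$ has no Hamiltonian path between its two antipodal vertices and hence is not Hamiltonian connected; this is exactly why the hypothesis $n \ge 3$ is needed.

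For the inductive step, fix $n \ge 4$ and assume $H_{n-1}$ is Hamiltonian connected, so in particular $|V(H_{n-1})| = 2^{n-1} \ge 8$. Recall that $H_n$ is the union of $0H_{n-1}$, $1H_{n-1}$, and the perfect matching joining $0x$ to $1\phi(x)$. The only feature of $\phi$ I will use is that it is a bijection, so that $a \ne b$ forces $\phi(a) \ne \phi(b)$; this is what lets me reroute a path through the other copy and still appeal to the inductive hypothesis there. Given two distinct vertices $u, v$ of $H_n$, I would distinguish cases according to which of $0H_{n-1}$, $1H_{n-1}$ contain $u$ and $v$.

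If $u = 0u'$ and $v = 0v'$ lie in the same copy, take a Hamiltonian path $w_0 w_1 \cdots w_m$ from $u'$ to $v'$ in $H_{n-1}$ (here $m = 2^{n-1} - 1 \ge 1$), and take a Hamiltonian path $R$ from $\phi(w_0)$ to $\phi(w_1)$ in $H_{n-1}$, which exists because $\phi(w_0) \ne \phi(w_1)$. Concatenating $0w_0$, the matching edge to $1\phi(w_0)$, the path $1R$, the matching edge from $1\phi(w_1)$ to $0w_1$, and the subpath $0w_1, 0w_2, \dots, 0w_m$ yields a Hamiltonian path from $u$ to $v$ in $H_n$: it visits $\{0w_0, 0w_1, \dots, 0w_m\} = V(0H_{n-1})$ together with all of $V(1H_{n-1})$, with no repetitions. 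The case $u, v \in 1H_{n-1}$ is symmetric. If instead $u = 0u'$ and $v = 1v'$ lie in different copies, choose a vertex $a$ of $H_{n-1}$ with $a \ne u'$ and $\phi(a) \ne v'$ --- possible since at most two vertices are excluded while $|V(H_{n-1})| \ge 8$ --- and then concatenate a Hamiltonian path from $u'$ to $a$ in $0H_{n-1}$, the matching edge from $0a$ to $1\phi(a)$, and a Hamiltonian path from $\phi(a)$ to $v'$ in $1H_{n-1}$ (the latter two exist by the inductive hypothesis and the choice of $a$). This exhausts all cases and completes the induction.

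I expect the base case $n = 3$ to be the only part requiring genuine (if modest) work; in the inductive step the special structure of the matching plays no role beyond its being given by a bijection, and the single point to watch is that $H_{n-1}$ is large enough to permit the choice of $a$ in the mixed case --- which, together with the failure of Hamiltonian connectedness for $C_4$, is why the induction must start at $n = 3$ rather than $n = 2$.
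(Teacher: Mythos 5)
Your proposal is correct and follows essentially the same route as the paper: induction from the base case $H_3$, with the same rerouting of the path through $1H_{n-1}$ between the first two vertices $w_0,w_1$ in the same-copy case, and the same ``traverse one copy, cross a matching edge, traverse the other'' construction in the mixed case. You are somewhat more explicit than the paper about the choice of the crossing vertex $a$ in the mixed case and about why the induction cannot start at $n=2$, but the argument is the same.
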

\begin{proof}
It is easy to check that $H_3$ is Hamiltonian connected.
From this, it is easy to derive that for any $n \ge 3$, $H_n$ is Hamiltonian connected. Indeed,
assume $n \ge 4$ and  $H_{n-1}$ is Hamiltonian connected. For $x,y \in H_n$,
if $x=0x',y=0y' \in 0H_{n-1}$ then let $P$ be a Hamiltonian path of $H_{n-1}$
connecting $x'$ and $y'$. Let $z'$ be the neighbor of $x'$ in $P$, and let $P''=P[z',y']=P-x$.
Let $P'$ be a Hamiltonian path in $H_{n-1}$ connecting $\phi(x')$ and $\phi(z')$. Then the
path start with the edge $(0x',1\phi(x'))$, followed by $1P'$, followed by the edge $(1\phi(z'), 0z')$ and then followed by $0P''$ is a Hamiltonian path of $H_n$ connecting $x$ and $y$.
If $x=0x' \in 0H_{n-1}$ and $y=1y' \in 1H_{n-1}$, then one can find a Hamiltonian path connecting $x$ and $y$ by starting from
$x$,   pass through all vertices of $0H_{n-1}$, then pass through all vertices of $1H_{n-1}$ and ending at $y$. By induction hypothesis, the paths needed in $0H_{n-1}$ and $1H_{n-1}$ exist.
\end{proof}

\section{The diameter of $H_n$}

Observe that for $x \in H_n$, a neighbor $y$ of $x$ in $H_n$ is obtained from $x$ by
changing a number of bits lying in an interval of the form $[i, i+\kappa(n-i)]$ for some $i \le n$.
As $\kappa(n-i) \le \kappa(n) $,
for $x \in H_n$,  the distance between $x$ and its complement $\bar{x}$ is at least $\lceil n/(\kappa(n)+1) \rceil$.

In the following, we give an upper bound on the diameter for $H_n$.
It follows from the definition that for any $j \ge 2$, $ \kappa(j)$ is equal to either $\kappa(j-1)$ or $\kappa(j-1)+1$.
For a positive integer $n$, let $$S(n)=\{j \le n: \kappa(j)=\kappa(j-1)+1\}$$ and let $$\sigma_n = \sum_{j \in S(n)} \frac{j}{(\kappa(j))^2}.$$

 \begin{theorem}
 \label{main}
 The graph $H_n$ defined above has diameter at most $  \frac{n}{\kappa(n)+1} +\sigma_n+2^{\kappa(n)}+ \kappa(n)$.
 \end{theorem}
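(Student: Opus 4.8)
It suffices to show that for any $x,y\in Z_2^{n}$ there is a walk in $H_n$ from $x$ to $y$ of length at most $\frac{n}{\kappa(n)+1}+\sigma_n+2^{\kappa(n)}+\kappa(n)$, and I would argue by induction on $n$ (the small values being checked by hand). Write $d=x\oplus y$ for the difference to be removed. Unwinding the construction behind the observation at the start of this section, from a vertex $v$ the \emph{move at level $i$} (the unique edge at $v$ that is a top edge of the depth‑$(i{-}1)$ subcube $v[1,i-1]H_{n-i+1}$) flips bit $i$ and replaces the bits of $v$ in positions $[i+1,i+\kappa(n-i)]$ by their sum with the ``tail'' bits of $v$ in positions $[n-\kappa(n-i)+1,n]$. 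Since $\kappa(n-i)\le\kappa(n-1)=:k$ for all $i\ge 1$, every tail used lies in the last $k$ coordinates, which by the Observation induce a copy $C$ of $H_k$.

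The plan is to clear $d$ greedily, one \emph{block} at a time. Scanning $i=1,2,\dots$, whenever the current string still differs from $y$ at position $i$, a single move at level $i$ corrects position $i$ \emph{and} all of positions $[i+1,i+\kappa(n-i)]$ at once — \emph{provided} the relevant $\kappa(n-i)$ tail bits (inside $C$) already hold the required value $d[i+1,i+\kappa(n-i)]$. The block starting at such an $i$ is $[i,i+\kappa(n-i)]$, and since the next block starts at the first surviving $1$ beyond $i+\kappa(n-i)$, consecutive block starts are more than $\kappa(n-i)$ apart; in particular the blocks are pairwise disjoint, so they may be processed in \emph{any} order. The only cost beyond ``one move per block'' is repeatedly loading $C$ with the successive required control values. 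I would therefore fix a Hamiltonian path of $C$ from $x[C]$ to $y[C]$ (this exists by Lemma~\ref{hal} once $k\ge 3$; the finitely many $n$ with $k\le 2$ are handled separately, paying only $O(1)$ to get on and off a Hamiltonian path), traverse it once, and at each point of the path perform the single move of every not‑yet‑treated block whose control demand matches the current tail value. This realizes all block moves at total cost $r+(2^{k}-1)$, where $r$ is the number of blocks, and leaves $C$ correctly at $y[C]$.

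Two accounting tasks remain. First, a bound on $r$: the value of $\kappa(n-\cdot)$ is constant and equal to $v$ on an interval of coordinates of length $\ell_{v+1}-\ell_v$, where $\ell_v$ is the $v$‑th element of $S(n)$ (the point at which $\kappa$ jumps to $v$); a block starting there has length $v+1$, so it contributes at most $\tfrac{\ell_{v+1}-\ell_v}{v+1}+1$ blocks. Summing over $v\le\kappa(n)$ and telescoping (Abel summation) gives $r\le\frac{n}{\kappa(n)+1}+\sum_{v}\frac{\ell_v}{v^2}+\kappa(n)=\frac{n}{\kappa(n)+1}+\sigma_n+\kappa(n)$, using $\tfrac1{v(v+1)}\le\tfrac1{v^2}$ and $\ell_v=\kappa$‑jump point. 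Second, the last $O(\kappa(n))$ coordinates need separate care: a bounded number of ``boundary'' blocks have their interval or flipped bit poking into $C$, and $d$ may have surviving $1$'s in the last few coordinates covered by no block; both are cleaned up inside a subcube of dimension $O(\kappa(n))$, which by the induction hypothesis (applied to a much smaller $H$) together with the trivial diameter bound costs $O(\kappa(n))$, after which $C$ is restored to $y[C]$ if it was disturbed. Putting the pieces together, the walk has length at most $r+(2^{k}-1)+O(\kappa(n))\le \frac{n}{\kappa(n)+1}+\sigma_n+2^{\kappa(n)}+\kappa(n)$, closing the induction.

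The genuine difficulty is not the strategy but the bookkeeping: one must make the greedy scan and the Hamiltonian traversal interact cleanly, ensuring that each coordinate is still ``pristine'' (untouched by earlier block moves and by control moves) at the moment its block is processed, and — especially for small and moderate $n$, where $\sigma_n+2^{\kappa(n)}+\kappa(n)$ is not yet negligible — one must check that every auxiliary cost really fits within that slack; this, and verifying the base cases of the induction, is where the care is required.
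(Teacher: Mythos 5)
Your strategy is essentially the paper's proof with its recursion unrolled. The paper proves, by induction on $n$, that any two vertices are joined by a \emph{$k$-robust} walk (one whose vertices realize every value of the last $k$ coordinates) of length at most $n/(\kappa(n)+1)+\sigma_n+2^k+k$; the base case $k\le n\le 2k$ supplies a Hamiltonian path of the tail cube, and each inductive step peels off one $(\kappa(n)+1)$-chunk of the prefix and inserts the single crossing edge at a vertex of the recursive walk whose tail equals the required control value $a\oplus b$. Unwinding that induction yields exactly your walk: a Hamiltonian traversal of the control copy of $H_k$ interleaved with one block move per greedy block of $x\oplus y$, fired when the tail matches, with the $\sigma_n$ term coming from the same telescoping you perform by Abel summation. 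So the ideas coincide; the difference is packaging.

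However, as written your accounting does not close to the stated inequality, and this is precisely the work the paper's packaging does. Your block count already costs $\frac{n}{\kappa(n)+1}+\sigma_n+\kappa(n)$ (the $+1$ per level of $\kappa$ supplies the $+\kappa(n)$), the Hamiltonian traversal costs $2^{k}-1$, and on top of that you pay a further $O(\kappa(n))$ for the boundary blocks that poke into the control cube and for finishing inside a subcube of dimension $O(\kappa(n))$. When $\kappa(n-1)=\kappa(n)$ (which happens for all $n\notin S(n)$) the slack $2^{\kappa(n)}-(2^{k}-1)$ is only $1$, so the total is $\frac{n}{\kappa(n)+1}+\sigma_n+2^{\kappa(n)}+O(\kappa(n))$ with a constant larger than $1$ on the last term: enough for Corollary~\ref{cor}, but not the exact bound of Theorem~\ref{main}. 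The paper avoids this by making the inductive statement about $k$-robust walks and letting the base case cover the whole range $k\le n\le 2k$, so that all interaction between the last $O(\kappa(n))$ coordinates and the control cube is absorbed once into the single $2^k+k$ term, and each inductive step adds exactly one edge, making the bound telescope with no leftover. To rescue your version you would need to fold the tail cleanup into the Hamiltonian-path budget (letting that traversal double as the walk that finishes the last $\le 2k$ coordinates) rather than charging for it separately --- which is, in effect, the robust-walk induction.
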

 \begin{proof}
 For $n \le 3$, this is obvious. Assume $n \ge 4$.
For a positive integer $k \le n$,
a {\em $k$-robust  walk} in $H_n$ is a walk $W$ such that  for each $z \in Z_2^{k}$, there is a vertex $v \in W$ with $v[n-k+1,n]=z$.
Observe that if $k' < k$, then a $k$-robust walk in $H_n$ is also a $k'$-robust walk.
To prove Theorem \ref{main}, it suffices to prove the following claim.

 \begin{claim}
 For any $n \ge k \ge \kappa(n)$, for any $x,y \in H_n$ (not necessarily distinct),
 there is a   $k$-robust  walk from $x$ to $y$ of length at most $  n/(\kappa(n)+1) +\sigma_n + 2^{k}+k$.
 \end{claim}
 \begin{proof}
 We prove this claim by induction on $n$.
 For $k \le n \le 2k$, we claim that there is a $k$-robust walk of length $n-k+2^k$ connecting $x$ and $y$.
If $n=k$, then let $W$ be a Hamiltonian path in $H_n$ connecting $x$ and $y$, which has length at most $2^k$.
Assume $n \ge k+1$ and the claim is true for $n-1$. If $x,y$ are in the same copy of $H_{n-1}$, then the conclusion follows from
the induction hypothesis.   Assume $x = 0x' \in 0H_{n-1}$ and $y = 1y'\in 1H_{n-1}$.
By induction hypothesis, there is a $k$-robust walk $W$ of length $n-1-k+2^k$ in $H_{n-1}$ connecting $\phi(x')$ and $y'$.
Then the walk start from the edge $(0x', 1 \phi(x'))$ followed by $1W$ is a $k$-robust walk of length at most
$n-k+2^k$ in $H_n$ connecting $x$ and $y$.

 Assume $n \ge 2k+1$ and the claim above holds for $ n' < n$.
 By definition, $H_n$ is obtained from two copies of $H_{n-1}$, $0H_{n-1}$ and $1H_{n-1}$,
 by adding an edge  $0x \sim 1\phi(x)$ for each $x \in H_{n-1}$.
 If $x,y$ belong to the same copy of $H_{n-1}$, then we are done by induction hypothesis,
 Assume   $x   \in 0Q_{n-1}$ and $y  \in 1Q_{n-1}$.

 Let $a=x[2,\kappa(n)+1]$, $x'=x[\kappa(n)+2,n]$,    $b=y[2,\kappa(n)+1]$ and $y'=y[\kappa(n)+2,n]$.
 So  $x = 0ax', y=1by'$ and $x', y' \in  H_{n-\kappa-1}$.

 By induction  hypothesis, there is
 a $k$-robust  walk $W$ in $H_{n-\kappa(n)-1}$ from
 $x'$ to $y'$ of length at most $$   \frac{n-\kappa(n)-1}{ \kappa(n-\kappa(n)-1)+1} +\sigma_{n-\kappa(n)-1}+2^k+k.$$

 Let $c =a \oplus b$. Since $W$ is a $k$-robust walk, there is a vertex $v \in W$
 with $v[n-\kappa(n)+1,n]=c$. Let $W_1=W[x',v]$ be the subwalk of $W$ from $x'$ to $v$,
 and $W_2=W[v, y']$ be the subwalk of $W$ from $v$ to $y'$. Then $0aW_1$   is a walk in $H_n$
 from $x=0ax'$ to $0av$, and $1bW_2$ is a walk in $H_n$ from $1bv$ and $1by'=y$. Since
 $$\phi(av)[1, \kappa(n)]=  a \oplus v[n-\kappa(n)+1,n] = a \oplus c = b$$ and $$\phi(av)[\kappa+1,n]=v,$$ we have $\phi(av)=bv$ and hence $0av $
 is adjacent to $ 1bv$ in $H_n$. Now the concatenation $0aW_1\cup 1bW_2$ of $0aW_1$ and
 $1bW_2$ is a $k$-robust  walk in $H_n$ connecting $x$ and $y$, whose  length is
 $$|W|+1 \le   \frac{n-\kappa(n)-1}{ \kappa(n-\kappa(n)-1)+1} +\sigma_{n-\kappa(n)-1}+2^k+k +1.$$

 It remains to show that
 $$\frac{n-\kappa(n)-1}{ \kappa(n-\kappa(n)-1)+1} +\sigma_{n-\kappa(n)-1}+2^k+k +1 \le \frac{n}{ \kappa(n) +1}  +\sigma_{n}+2^k+k.$$
 I.e.,
 $$\frac{n-\kappa(n)-1}{ \kappa(n-\kappa(n)-1)+1} +\sigma_{n-\kappa(n)-1}  +1 \le \frac{n}{ \kappa(n) +1}  +\sigma_{n} .$$
 It follows easily from the definition that    $\kappa(n-\kappa(n)-1 )$ is either equal to $\kappa(n)$ or equal to $\kappa(n)-1$.
 In the former case, we have $\sigma_n = \sigma_{n-\kappa(n)-1}$, and hence
 $$   \frac{n-\kappa(n)-1}{ \kappa(n-\kappa(n)-1)+1} +\sigma_{n-\kappa(n)-1}+1 = \frac{n}{ \kappa(n)+1} +\sigma_{n}. $$
 In the later case, $$\sigma_n = \sigma_{n-\kappa(n)-1} + \frac{j}{(\kappa(j))^2}$$
 for some $n-\kappa(n) \le j \le n$, and hence
$$\sigma_n \ge \sigma_{n-\kappa(n)-1} + \frac{n-\kappa(n)}{\kappa(n)^2}.$$
Therefore,
\begin{eqnarray*}
  \frac{n-\kappa(n)-1}{ \kappa(n-\kappa(n)-1)+1} +\sigma_{n-\kappa(n)-1} +1&\le &  \frac{n-\kappa(n)-1}{ \kappa(n) } +\sigma_{n}- \frac{n-\kappa(n)}{\kappa(n)^2}+1 \\
  &=&  \frac{n-\kappa(n)-1}{ \kappa(n) +1} + \frac{n-\kappa(n)-1} {\kappa(n)(\kappa(n)+1)}+\sigma_{n}- \frac{n-\kappa(n)}{\kappa(n)^2}+1 \\
  &\le& \frac{n}{ \kappa(n) +1}  +\sigma_{n}.
  \end{eqnarray*}

 \end{proof}

This  completes the proof of Theorem \ref{main}.
\end{proof}

\begin{cor}
\label{cor}
For any positive integer $n$, the Z-cube $H_n$ has diameter at most $  (1+o(1))\frac{n}{\log_2 n}$.
\end{cor}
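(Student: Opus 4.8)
The plan is to read the corollary off from Theorem~\ref{main}, which bounds the diameter of $H_n$ by $\frac{n}{\kappa(n)+1}+\sigma_n+2^{\kappa(n)}+\kappa(n)$; hence it suffices to show that the last three summands are $o(n/\log_2 n)$, while the first equals $(1+o(1))\,n/\log_2 n$. The first step is to record the elementary asymptotics $\kappa(n)=(1+o(1))\log_2 n$, which holds because $\log_2 n-2\log_2\log_2 n\le\kappa(n)\le\log_2 n-2\log_2\log_2 n+1$ for all large $n$ and $2\log_2\log_2 n=o(\log_2 n)$. From $\kappa(n)+1\ge(1-o(1))\log_2 n$ one gets $\frac{n}{\kappa(n)+1}\le(1+o(1))\frac{n}{\log_2 n}$, the desired main term. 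The two remaining ``easy'' summands are disposed of immediately: $\kappa(n)=O(\log n)=o(n/\log_2 n)$, and, using $\kappa(n)\le\log_2 n-2\log_2\log_2 n+1$, we get $2^{\kappa(n)}\le 2n/(\log_2 n)^2=o(n/\log_2 n)$.

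The summand that genuinely requires an argument is $\sigma_n=\sum_{j\in S(n)}\frac{j}{(\kappa(j))^2}$, because the crude bound $\frac{j}{(\kappa(j))^2}\le n$ only yields $\sigma_n=O(n\log n)$, which is useless. The key is a structural observation about $S(n)$: since $\kappa$ is nondecreasing with $\kappa(j)\in\{\kappa(j-1),\kappa(j-1)+1\}$ for every $j\ge2$, the set $S(n)$ consists of exactly one index per ``level'', namely, for each integer $m$ with $1\le m\le\kappa(n)$, the unique smallest $j=j_m$ with $\kappa(j_m)=m$; in particular $|S(n)|=\kappa(n)$. For such $j_m$ with $m\ge2$ (the level $m=1$ contributes only the index $j=2$, hence the constant $2$) the maximum in the definition of $\kappa$ is inactive, so $\lceil\log_2 j_m-2\log_2\log_2 j_m\rceil=m$, whence $\log_2 j_m-2\log_2\log_2 j_m\le m$, i.e.\ $j_m\le 2^m(\log_2 j_m)^2$. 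Writing $L=\log_2 j_m$, this says $L\le m+2\log_2 L$; since $2\log_2 L\le L/2$ once $L\ge 16$, it follows that $L\le 2m$ for all $m$ beyond an absolute constant (the finitely many smaller levels contribute $O(1)$ to $\sigma_n$), so $j_m\le 4\cdot 2^m m^2$ and therefore $\frac{j_m}{(\kappa(j_m))^2}=\frac{j_m}{m^2}\le 4\cdot 2^m$. Summing this essentially geometric series over $m\le\kappa(n)$ gives $\sigma_n=O(2^{\kappa(n)})$, which we have already seen is $o(n/\log_2 n)$.

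Assembling the four estimates, the bound of Theorem~\ref{main} becomes $(1+o(1))\frac{n}{\log_2 n}+o(n/\log_2 n)=(1+o(1))\frac{n}{\log_2 n}$, which proves the corollary. The only real obstacle is the estimate for $\sigma_n$: one must exploit that $S(n)$ is ``thin'' — one index per level of $\kappa$ — together with the fact that the index sitting at level $m$ is at most roughly $2^m m^2$, so that the sum is controlled by its top term $2^{\kappa(n)}$ rather than by the number of terms times $n$; everything else is a routine matter of substituting the definition of $\kappa$ and simplifying.
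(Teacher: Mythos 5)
Your proposal is correct and follows essentially the same route as the paper: the main term and the $2^{\kappa(n)}+\kappa(n)$ terms are handled identically, and your treatment of $\sigma_n$ (one index $j_m$ per level of $\kappa$, with $j_m\le 4\cdot 2^m m^2$ obtained from $\log_2 j_m\le 2\kappa(j_m)$ for large $j_m$, then summing a geometric series to get $O(2^{\kappa(n)})$) is exactly the paper's argument with $j_m$ playing the role of its $\kappa^{-1}(m)$.
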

\begin{proof}
By Theorem \ref{main}, $H_n$ has diameter at most
\begin{eqnarray*}
& & \frac{n}{\kappa(n) +1} + \sigma_n + 2^{\kappa(n)} + \kappa(n) \\
& \le & \frac{n}{\log_2n} \frac{\log_2 n}{\log_2 n - 2\log_2 \log_2 n+1} + \sigma_n+2^{\kappa(n)} + \kappa(n)\\
&=& (1+o(1)) \frac{n}{\log_2 n} + \sigma_n+2^{\kappa(n)} + \kappa(n).
 \end{eqnarray*}

As $\kappa(n) \le \log_2n-2 \log_2 \log_2 n +1$, we have
\begin{eqnarray*}
2^{\kappa(n)}+\kappa(n)
 &\le& 2^{\kappa(n)+1}  \\
 &\le&  \frac{4n}{(\log_2n)^2}\\
 &=& o(n/\log_2n).
 \end{eqnarray*}
To prove that $H_n$ has diameter at most $(1+o(1))n/\log_2n$, it remains to show that $\sigma_n = o(n/ \log_2n)$.
By definition,
$$\sigma_n = \sum_{j \in S(n)} \frac{j}{(\kappa(j))^2} = \sum_{i=1}^{\kappa(n)}\frac{\kappa^{-1}(i)}{i^2}, \eqno(1)$$
here we let $\kappa^{-1}(i) = \min\{j: \kappa(j)=i\}.$
As $\log_2i -2 \log_2 \log_2 i \le \kappa(i)$,
we have $$i \le (\log_2 i)^2 2^{\kappa(i)}. \eqno(2)$$
There is a constant $c$ such that for $i \ge c$,
  $\frac{1}{2} \log_2 i \ge 2 \log \log_2 i$. Plug this into the definition of $\kappa$, we have
$$\log_2 i \le 2 \kappa(i). \eqno(3)$$
Plug (3) into (2), we have
$$i \le (2 \kappa(i))^2 2^{\kappa(i)}. \eqno(4)$$
So for $i \ge c$, $$\kappa^{-1}(i) \le 4 i^22^i. \eqno(5)$$
Plug (5) into (1), we have
\begin{eqnarray*}
 \sigma_n \le 4 \sum_{i=c+1}^{\kappa(n)} 2^i + C,
\end{eqnarray*}
where $C = \sum_{i=}^c \frac{\kappa^{-1}(i)}{i^2}$ is a constant. Hence
$$\sigma_n \le 2^{\kappa(n)+3}+C \le 16\frac{n}{(\log_2n)^2} + C = o(n/\log_2 n).$$
This completes the proof of the corollary.
\end{proof}

\section{Some questions and discussions}

The upper bound on the diameter of $H_n$ given in Theorem \ref{main} is probably not tight. One natural problem is to determine the diameter
of $H_n$.

\begin{question}
What is  the diameter of $H_n$?
\end{question}

One nice property of the hypercube is that one can easily find a shortest path between any two vertices.
However, it seems not easy to find the shortest path between two vertices in $H_n$.

\begin{question}
Is there an algorithm that finds a shortest path between any two vertices of $H_n$ in time polynomial of $n$?
\end{question}

Since the diameter of $H_n$ is much smaller than that of the hypercube $Q_n$, it is perhaps good enough
to have a quick algorithm that finds a short path (not necessarily the shortest) path between two vertices.
For this purpose, it is also desirable to have knowledge on the average distance between two vertices of $H_n$.

\begin{question}
What is the average distance between two vertices of $Q_n$ ?
\end{question}

We have shown that the Z-cube $H_n$ is Hamiltonian connected for $n \ge 3$. Fault tolerance has been studied a lot for
various variants of hypercubes \cite{HTHH2002}. The same question is also interesting for Z-cubes.

\begin{question}
What is the minimum number of vertices and/or edges whose deletion results in
a non-Hamiltonian graph?
\end{question}

Not many automorphisms of $H_n$ are known. It is unknown if $H_n$ is vertex-transitive.

\begin{question}
What is the automorphism group of $H_n$?
\end{question}

One can define a variant of hypercube which is similar to $H_n$, but have simpler structure.
The precise definition is as follows:

For a fixed non-negative integer $k$,   we define a permutation $\phi_k$ of binary strings as follows:
\begin{definition}
Assume $x \in Z_2^n$. Then $\phi_k(x) \in Z_2^n$ is the binary string such that
\begin{itemize}
\item If $ n \le k$, then $\phi_k(x)=x$.
\item If $k+1 \le n \le 2k$, then
 \begin{eqnarray*}
 \phi_k(x)[1,n-k]&=&x[1,n-k] \oplus x[k+1,n], \\
 \phi_k(x)[n-k+1,n]&=&x[n-k+1,n].
 \end{eqnarray*}
\item If $n \ge 2k+1$, then
  \begin{eqnarray*}
 \phi_k(x)[1,k] &=& x[1,k] \oplus x[n-k+1,n],\\
 \phi_k(x)[k+1,n] &=& x[k+1,n].
 \end{eqnarray*}
 \end{itemize}
 \end{definition}

We define a family of Z-cubes $Z_{n,k}$ as follows:
\begin{definition}
If $n=1$ then $Z_{n,k}=K_2$, with vertices $0$ and $1$.
Assume $n \ge 2$. Then $Z_{n,k}$ is obtained from two copies of $Z_{n-1,k}$,   $0Z_{n-1,k}$ and $1Z_{n-1,k}$,  by
 adding edges connecting  $0x $ and $1\phi_k(x)$ for all $x \in Z_{n-1,k}$.
 \end{definition}

 A similar (but simpler) argument as the proof of Theorem \ref{main} shows that
  the graph $Z_{n,k}$  has diameter at most $  n/(k+1)  + 2^{k}$.

For each $n$, let $Z^*_n=Z_{n, \kappa(n)}$. Then it is easy to verify (with an argument simpler than the proof of
Corollary \ref{cor}) that $Z^*_n$ has diameter at most
\begin{eqnarray*}
 \left(1+ \frac{2\log_2 \log_2 n +1}{\log_2 n- 2 \log_2 \log_2 n -1 }+ \frac{1}{\log_2n}\right)  \frac{n}{\log_2 n} 
  =\left(1+o(1)\right) \frac{n}{\log_2 n}.
\end{eqnarray*}

Although both Z-cubes $H_n$ and $Z^*_n$ have diameter $(1 +o(1))n/\log_2 n$, the upper bound for
the diameter of $Z^*_n$ obtained above is smaller than the upper bound for the the diameter of $H_n$   in  Theorem \ref{main}.
So we may prefer the Z-cube $Z^*_n$ to $H_n$, if we aim to construct a cube of a fixed dimension.

The disadvantage of $Z^*_n$ is that for each $n$, to construct a Z-cube $Z^*_n$, we start with
a special $k$ from the very beginning. If we want to increase the dimension of the cube later, we may need to start the
construction from a larger $k$. So we cannot just expand the existing cube. Instead we need to
start the construction process from the very beginning. Nevertheless,
if we fix an integer $k$ at the beginning, as $n$ goes to infinity, the diameter of $Z_{n,k}$ is at most $n/(k+1)+2^k$,
which is probably good enough for many purposes. At least it is better than the earlier variants of hypercubes, for the sake of diameter.

Also, it seems that   the Z-cubes $Z_{n,k}$   have simpler structure than $H_n$,
and it is probably easier to design a routing algorithm in $Z_{n,k}$.
Nevertheless, the   questions asked above are also open for the Z-cubes $Z_{n,k}$:
(1) What is the diameter of $Z_{n,k}$? (2) Is there an algorithm that finds a shortest path between any two vertices of $Z_{n,k}$ in time
polynomial in $n$?  (3) What is the average distance between two vertices of $Z_{n,k}$?
(4) What is the minimum number of vertices and/or edges whose deletion results in
a non-Hamiltonian graph? (5)
What is the automorphism group of $H_n$? The Z-cubes are new variants of hypercubes. Many problems about
hypercubes and their variants should be interesting for Z-cubes as well.

  \section*{Acknowledgment}

The author would like to thank Professor Meijie Ma for bringing reference \cite{ZFJZ} to his attention.

\end{document}